\newtheorem{teor}{Theorem}
\newtheorem{prop}{Proposition}
\newtheorem{lem}{Lemma}
\theoremstyle{definition}
\renewcommand{\subjclassname}{AMS \textup{2010} Mathematics Subject
Classification\ }
\author{J.M. Grau}
\address{Departamento de Matemáticas, Universidad de Oviedo\\ Avda. Calvo Sotelo s/n, 33007 Oviedo, Spain}
\email{grau@uniovi.es}
\author{A. M. Oller-Marc\'{e}n}
\address{Centro Universitario de la Defensa\\ Ctra. Huesca s/n, 50090 Zaragoza, Spain} \email{oller@unizar.es}
\author{C. Tasis}
\address{Departamento de Matemáticas, Universidad de Oviedo\\ Avda. Calvo Sotelo s/n, 33007 Oviedo, Spain}
\email{ctasis@uniovi.es}
\title{On the diameter of the commuting graph of the full matrix ring over the real numbers}
\begin{document}

\maketitle
\subjclassname{05C50; 15A27}

\keywords{Keywords: Commuting graph, Diameter, Idempotent matrix}

\begin{abstract} 
In a recent paper C. Miguel proved that the diameter of the commuting graph of the matrix ring $\mathrm{M}_n(\mathbb{R})$ is equal to $4$ either if $n=3$ or $n>4$. But the case $n=4$ remained open, since the diameter could be $4$ or $5$. In this work we close the problem showing that also in this case the diameter is equal to $4$.
\end{abstract}

\section{Introduction}

For a ring $R$, the \textit{commuting graph} of $R$, denoted by $\Gamma(R)$, is a simple undirected graph whose vertices are all non-central elements of $R$, and two distinct vertices $a$ and $b$ are adjacent if and only if $ab = ba$. The commuting graph was introduced in \cite{Ak1} and has been extensively studied in recent years by several authors \cite{Ak2,Ak3,Ak4,Ak5,Araujo,doli,mo, omi}.

In a graph $G$, a path $\mathcal{P}$ is a sequence of distinct vertices $(v_1,\cdots v_k)$ such that every two consecutive vertices are adjacent. The number $k-1$ is called the length of $\mathcal{P}$. For two vertices $u$ and $v$ in a graph $G$, the distance between $u$ and $v$, denoted by $d(u,v)$, is the length of the shortest path between $u$ and $v$, if such a path exists. Otherwise, we define $d(u,v) =\infty$. The diameter of a graph $G$ is defined $$\textrm{diam}(G) = \sup	\{d(u,v) : \textrm{ \textit{u} and \textit{v} are distinct vertices of $G$\}}.$$
A graph $G$ is called connected if there exists a path between every two distinct vertices of $G$.

Much research has been conducted regarding the diameter of commuting graphs of certain classes of rings \cite{Ak3,doli,Dolan1,Giu}. In the case of matrix rings over an algebraically closed field $\mathbb{F}$, $\mathrm{M}_n(\mathbb{F})$, it was proved \cite{Ak3} that the commuting graph with $n>2$ is connected and its diameter is always equal to four; while if $n = 2$ the commuting graph is always disconnected \cite{Ak5}. On the other hand, if the field $\mathbb{F}$ is not algebraically closed, the commuting graph $\Gamma(\mathrm{M}_n(\mathbb{F}))$ may be disconnected for an arbitrarily large integer $n$ \cite{Ak4}. However, for any field $\mathbb{F}$ and $n\geq 3$, if $\Gamma(\mathrm{M}_n(\mathbb{F}))$ is connected, then the diameter is between four and six \cite{Ak3}. Moreover, this diameter is conjectured to be at most 5 and if $n=p$ is prime it is proved that the diameter is, in fact, 4. Quite recently, C. Miguel \cite{cel} has verified this conjecture in the case $\mathbb{F}=\mathbb{R}$ proving the following result.

\begin{teor}
Let $n\geq 3$ be any integer. Then, $\textrm{diam}(\Gamma(\mathrm{M}_n(\mathbb{R})))=4$ for $n\neq 4$ and $4\leq \textrm{diam}(\Gamma(\mathrm{M}_4(\mathbb{R})))\leq 5$.
\end{teor}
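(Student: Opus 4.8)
The plan is to establish the upper bounds directly — $\mathrm{diam}(\Gamma(\mathrm{M}_n(\mathbb{R})))\le 4$ for $n\neq 4$ and $\le 5$ for $n=4$ — since a finite upper bound simultaneously proves that $\Gamma(\mathrm{M}_n(\mathbb{R}))$ is connected, after which the matching lower bound $\mathrm{diam}\ge 4$ is immediate from the general estimate of \cite{Ak3} (valid for any field and any $n\ge 3$ once connectedness is known). Throughout I will use that conjugation $A\mapsto gAg^{-1}$ is an automorphism of the commuting graph, so distances are invariant under simultaneous conjugation and I may replace $A$ and $B$ by their real primary canonical forms. The workhorse is an elementary connecting lemma: if $M,M'\in\mathrm{M}_n(\mathbb{R})$ are non-scalar and singular with $\ker M\cap\ker M'\neq\{0\}$ and $\operatorname{im}M+\operatorname{im}M'\neq\mathbb{R}^n$, then choosing $0\neq v\in\ker M\cap\ker M'$ and $0\neq w\in(\operatorname{im}M)^{\perp}\cap(\operatorname{im}M')^{\perp}$, the rank-one matrix $Z=v\,w^{\mathsf T}$ satisfies $MZ=ZM=M'Z=ZM'=0$ and is non-scalar, so $d(M,M')\le 2$. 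Both hypotheses follow from the single numerical condition $\operatorname{rank}M+\operatorname{rank}M'\le n-1$. Hence the whole problem reduces to exhibiting, for each non-scalar $A$, a non-scalar singular matrix $S_A\in C(A)$ of suitably small rank, and then running the length-four path $A - S_A - Z - S_B - B$.

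I would produce $S_A$ from the real canonical form. If $\mathbb{R}^n$ is $A$-decomposable then $A$ commutes with a non-trivial idempotent, and either directly or by descending into a summand one obtains a singular $S_A$ of small rank. If $\mathbb{R}^n$ is $A$-indecomposable then $A$ is similar to the companion matrix of $p(x)^k$ for a single monic irreducible $p$, and one takes $S_A=(A-aI)^{\,n-1}$ (rank $1$) when $p=x-a$, or $S_A=p(A)^{\,k-1}$ (rank $2$) when $p$ is an irreducible quadratic. Since a degree-$n$ characteristic polynomial with $n\ge 3$ is never irreducible over $\mathbb{R}$, the algebra $C(A)$ is never a division ring and such $S_A$ always exists. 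For $n$ odd one checks $\operatorname{rank}S_A\le (n-1)/2$ in every case, so $\operatorname{rank}S_A+\operatorname{rank}S_B\le n-1$ and the connecting lemma gives $\mathrm{diam}\le 4$. For even $n\ge 6$ the same bookkeeping yields $\operatorname{rank}S_A\le n/2-1$, the only delicate point being an even split into two halves on which $A$ acts non-scalarly, which I handle by descending into a summand (of dimension $\ge 3$) or by using a power of the nilpotent part; again $\operatorname{rank}S_A+\operatorname{rank}S_B\le n-2$ and $\mathrm{diam}\le 4$.

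The genuinely hard case, and the reason the bound degrades to $5$, is $n=4$ with both $A$ and $B$ indecomposable of type $p(x)^2$, $p$ an irreducible quadratic. Then $C(A)$ is the local algebra $\mathbb{R}[x]/(p^2)$, so every non-scalar singular matrix in $C(A)$ is a unit multiple of $p(A)$ and has rank exactly $2$, with one fixed $2$-dimensional kernel and image; thus $\operatorname{rank}S_A+\operatorname{rank}S_B=4>3$ and the connecting lemma is unavailable. I expect this to be the main obstacle. The remedy is to spend one extra vertex, building a length-three path $S_A - Z - W - S_B$ with rank-one $Z=v\,w^{\mathsf T}$ ($v\in\ker S_A$, $w\in(\operatorname{im}S_A)^{\perp}$) and $W=v'\,w'^{\mathsf T}$ ($v'\in\ker S_B$, $w'\in(\operatorname{im}S_B)^{\perp}$), exploiting the two-dimensional freedom in each choice to force $ZW=WZ$; this yields $d(A,B)\le 5$. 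Mixed pairs, where at least one of $A,B$ admits a smaller $S$, fall back into the rank budget of the connecting lemma and are easier.

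Finally, the finiteness of all these bounds shows $\Gamma(\mathrm{M}_n(\mathbb{R}))$ is connected for $n\ge 3$, whence $\mathrm{diam}\ge 4$ by \cite{Ak3}. Combining the two directions gives $\mathrm{diam}=4$ for $n\neq 4$ and $4\le\mathrm{diam}\le 5$ for $n=4$, which is exactly the stated theorem.
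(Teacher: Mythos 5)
Your proposal is essentially correct, but it takes a genuinely different route from the proof this paper relies on. The statement is Miguel's theorem, which the paper quotes from \cite{cel} without reproving it; Miguel's argument is a case-by-case analysis of the possible real Jordan canonical forms, constructing short paths in each configuration and isolating the one problematic configuration at $n=4$, namely $A$ and $B$ both without real eigenvalues and exactly one of them diagonalizable over $\mathbb{C}$. Your scheme --- produce a singular non-scalar $S_A\in C(A)$ of small rank from the rational canonical form, then bridge through a rank-one matrix $Z=vw^{\mathsf T}$ annihilating both sides --- is closer in spirit to \cite{Ak3} (and to Proposition \ref{prop} of the present paper, which likewise trades an idempotent and a nilpotent for common kernels). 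What your route buys is uniformity: one connecting lemma plus rank bookkeeping disposes of every $n\neq 4$ at once, and the verifiable details (the rank bounds $1$, $2$, and ``smallest summand'' for indecomposable and decomposable types, the budget $\operatorname{rank}S_A+\operatorname{rank}S_B\leq n-1$, the dimension counts for the bridge) all check out. What Miguel's finer analysis buys is precision at $n=4$: he proves $d(A,B)\leq 4$ in all but a single configuration, which is exactly the opening the present paper exploits to push the diameter down to $4$; your method stalls at $5$ on a strictly larger family of pairs and could not be sharpened to $4$ without a new idea such as Proposition \ref{prop}.

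One concrete slip to repair: your identification of the hard case at $n=4$ is incomplete. The rank budget fails precisely when neither $A$ nor $B$ commutes with a non-scalar matrix of rank one, and that happens exactly when both lack real eigenvalues: if $Z=vw^{\mathsf T}\neq 0$ commutes with $A$, then $(Av)w^{\mathsf T}=v(w^{\mathsf T}A)$ forces $Av\in\mathrm{span}\{v\}$, i.e.\ a real eigenvector. So besides your ``both indecomposable of type $p(x)^2$'' case, pairs with $A\sim A_{a,b}\oplus A_{c,d}$ are equally stuck: there $C(A)$ is isomorphic as a real algebra to $\mathbb{C}\oplus\mathbb{C}$ (or to $\mathrm{M}_2(\mathbb{C})$ when the blocks coincide), and every nonzero singular element has real rank exactly $2$. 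These pairs are not ``mixed pairs'' in your sense, so as written they fall through your case analysis. Fortunately your length-five bridge $S_A - Z - W - S_B$ uses nothing beyond $\operatorname{rank}S_A=\operatorname{rank}S_B=2$, which gives $2$-dimensional kernels and co-images and hence $\dim(\ker S_B\cap w^{\perp})\geq 1$ and $\dim((\operatorname{im}S_B)^{\perp}\cap v^{\perp})\geq 1$, so it covers the omitted pairs verbatim; restate the hard case as ``both $A$ and $B$ without real eigenvalues'' and your proof of the stated bounds is complete.
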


Unfortunately, this result left open the question wether $\textrm{diam}(\Gamma(\mathrm{M}_4(\mathbb{R})))$ is $4$ or $5$. In this paper we solve this open problem. Namely, we will prove the following result.

\begin{teor}
For every $n\geq 3$, $\textrm{diam}(\Gamma(\mathrm{M}_n(\mathbb{R})))=4$.
\end{teor}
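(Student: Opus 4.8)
The plan is to lean on Theorem 1: it already gives $\mathrm{diam}(\Gamma(\mathrm{M}_n(\mathbb{R})))=4$ for every $n\neq 4$ and $4\le \mathrm{diam}(\Gamma(\mathrm{M}_4(\mathbb{R})))\le 5$, so it suffices to prove $\mathrm{diam}(\Gamma(\mathrm{M}_4(\mathbb{R})))\le 4$, i.e.\ that any two non-scalar matrices $A,B\in \mathrm{M}_4(\mathbb{R})$ are joined by a path of length at most $4$. I would aim for a path of the shape $A-C-F-D-B$, where $C$ commutes with $A$, $D$ commutes with $B$, and $F$ is a common neighbour of $C$ and $D$. Thus the whole argument splits into a \emph{reduction step}, producing convenient connectors $C,D$ adjacent to $A$ and $B$, and a \emph{linking step}, bounding the distance between any two connectors by $2$. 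As the connector class I would take the family $\mathcal H$ of non-scalar matrices possessing a real eigenvalue of geometric multiplicity at least $2$ (equivalently, matrices acting as a real scalar on some $2$-dimensional subspace).

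The reduction step is: every non-scalar $A\in \mathrm{M}_4(\mathbb{R})$ commutes with some member of $\mathcal H$. I would prove this by inspecting the real (rational) canonical form of $A$, using that over $\mathbb{R}$ every irreducible polynomial has degree $1$ or $2$. If $\mathbb{R}^4$ decomposes as a direct sum of two proper $A$-invariant subspaces, then $A$ commutes with the associated projection, a nontrivial idempotent, and every nontrivial idempotent of $\mathrm{M}_4(\mathbb{R})$ lies in $\mathcal H$ (its $0$- and $1$-eigenspaces are complementary, so at least one has dimension $\ge 2$). The only matrices for which $\mathbb{R}^4$ is $A$-indecomposable are a single real Jordan block $A=\lambda I+N$ with $N^3\neq 0=N^4$, and the indecomposable block whose minimal polynomial is an irreducible quadratic squared; in the first case $A$ commutes with the rank-two square-zero matrix $N^2\in\mathcal H$, and in the second with $M=(A-aI)^2+b^2I$, where $x^2-2ax+(a^2+b^2)$ is the relevant quadratic, which is again a rank-two square-zero matrix and hence lies in $\mathcal H$.

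The heart of the proof is the linking step: any two $F,F'\in\mathcal H$ satisfy $d(F,F')\le 2$. I would let $V=\ker(F-\alpha I)$ and $V'=\ker(F'-\alpha' I)$ be the two $2$-dimensional eigenspaces and, dually, $U=\ker(F^{\mathsf T}-\alpha I)$ and $U'=\ker(F'^{\mathsf T}-\alpha' I)$ the left-eigenspaces (equal geometric multiplicities, so again of dimension $\ge 2$). If $V\cap V'\neq 0$ and $U\cap U'\neq 0$, then a rank-one matrix $xy^{\mathsf T}$ with $0\ne x\in V\cap V'$ and $0\ne y\in U\cap U'$ commutes with both $F$ and $F'$ — recall $xy^{\mathsf T}$ commutes with a matrix exactly when $x$ is a right-eigenvector and $y$ a left-eigenvector for one common eigenvalue — and furnishes a non-scalar common neighbour. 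The main obstacle, and the precise reason $n=4$ is delicate, is the critical configuration in which these intersections vanish, so that $\mathbb{R}^4=V\oplus V'$ as complementary $2$-planes, which in dimension $4$ is genuinely possible. There I would abandon the rank-one ansatz and write a prospective common neighbour in block form adapted to $V\oplus V'$; imposing commutation with $F$ and with $F'$ forces the off-diagonal blocks to vanish and ties the two diagonal blocks together by a conjugation, leaving a single residual condition requiring one $2\times 2$ block to commute with a fixed $2\times 2$ matrix $J$ built from the data. Since the commutant of any $2\times 2$ matrix is at least $2$-dimensional, it contains a non-scalar element, giving a non-scalar common neighbour; the degenerate subcase $J$ scalar is even more flexible. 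This $2$-dimensional-commutant phenomenon is exactly what rescues the otherwise tight count $2+2=4$.

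Finally I would assemble the pieces: given non-scalar $A,B$, choose connectors $C,D\in\mathcal H$ adjacent to them by the reduction step and a common neighbour $F$ of $C,D$ by the linking step, obtaining $d(A,B)\le 1+2+1=4$ (coincidences among $A,C,F,D,B$ only shorten the path). Hence $\mathrm{diam}(\Gamma(\mathrm{M}_4(\mathbb{R})))\le 4$, and together with the lower bound from Theorem 1 this yields equality, completing the statement for every $n\ge 3$. I expect the block computation in the critical complementary-planes case to be the only genuinely technical point; everything else is bookkeeping over the finite list of real canonical forms.
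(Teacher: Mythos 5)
Your overall architecture (attach a special ``connector'' to each of $A$ and $B$, then join any two connectors by a path of length two) parallels the paper's, but your linking step is false as stated, and this is a genuine gap. Take
$$F=\begin{pmatrix}0&0\\0&A_{0,1}\end{pmatrix},\qquad F'=\begin{pmatrix}R&0\\ I_2&0\end{pmatrix},\qquad R=\begin{pmatrix}1&0\\0&2\end{pmatrix}.$$
Both lie in your class $\mathcal H$: each has $0$ as a real eigenvalue of geometric multiplicity two, with eigenspaces $V=\langle e_1,e_2\rangle$ and $V'=\langle e_3,e_4\rangle$, so $\mathbb{R}^4=V\oplus V'$ and we are in your critical configuration. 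A matrix commuting with $F$ must be block diagonal, $X=X_{11}\oplus X_{22}$ with $X_{22}=aI_2+bA_{0,1}$ (the off-diagonal blocks vanish because $A_{0,1}$ is invertible); commuting with $F'$ then forces $X_{11}=X_{22}$ and $X_{11}R=RX_{11}$, i.e.\ $X_{11}$ diagonal. The only diagonal matrices of the form $aI_2+bA_{0,1}$ are the scalars, so the common commutant of $F$ and $F'$ is $\mathbb{R}I_4$ and $d(F,F')\geq 3$. The flaw in your sketch is the claim that the block computation leaves ``a single residual condition'' of commuting with one $2\times 2$ matrix $J$: in general the two diagonal blocks are each separately constrained (by $F_{22}$ and by $F'_{11}$ in your notation), and after the conjugation that ties them together the surviving block must lie in the \emph{intersection} of two $2\times 2$ commutants, which can be exactly $\mathbb{R}I_2$.

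The strategy is salvageable only because your reduction step never produces arbitrary elements of $\mathcal H$: it produces non-scalar idempotents and rank-two square-zero matrices. For those special pairs the linking does go through: idempotent with square-zero is exactly the paper's Proposition~\ref{prop}; square-zero with square-zero also works, since there $F_{22}=F'_{11}=0$ and one really is left with a single commutant condition; idempotent with idempotent requires its own small case analysis. So you would need to restate the linking lemma for these three pair types and prove each. The paper sidesteps all but one of them: it quotes Miguel's analysis to reduce to the single configuration $A\sim A_{a,b}\oplus A_{c,d}$, $B\sim\left(\begin{smallmatrix}A_{s,t}&I_2\\0&A_{s,t}\end{smallmatrix}\right)$, attaches an idempotent to $A$ and a square-zero matrix to $B$, and then needs only the idempotent/square-zero case, which is its Proposition~\ref{prop} (proved over any division ring). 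Your reduction step itself is correct.
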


\section{On the diameter of $\Gamma(\mathrm{M}_n(\mathbb{R})$}

Before we proceed, let us introduce some notation. If $a,b\in\mathbb{R}$, we define the matrix $A_{a,b}$ as
$$A_{a,b}:=\begin{pmatrix} a & b\\ -b & a\end{pmatrix}.$$
Now, given two matrices $X,Y\in M_2(\mathbb{R})$, we define
$$X\oplus Y:=\begin{pmatrix} X & 0\\ 0 & Y\end{pmatrix}\in \mathrm{M}_4(\mathbb{R}).$$
Finally, two matrices $A,B\in \mathrm{M}_2(\mathbb{R})$ are similar ($A\sim B$) if there exists a regular matrix $P$ such that $P^{-1}AP=B$.

As we have pointed out in the introduction, in \cite[Theorem 1.1.]{cel} it is proved that the diameter of $\Gamma(\mathrm{M}_n(\mathbb{R}))$ is equal to 4 if $n\geq 3, n\neq 4$ and that $4\leq\textrm{diam}(\Gamma(\mathrm{M}_4(\mathbb{R})))\leq 5$. The proof given in that paper relies on the possible forms of the Jordan canonical form of a real matrix. In particular, it is proved that the distance between two matrices $A,B\in \mathrm{M}_4(\mathbb{R})$ is at most 4 unless we are in the situation where $A$ and $B$ have no real eigenvalues and only one of them is diagonalizable over $\mathbb{C}$. In other words, the case when
$$A\sim \begin{pmatrix} A_{a,b} & 0 \\ 0 & A_{c,d} \end{pmatrix},\quad B\sim \begin{pmatrix} A_{s,t} & I_2 \\ 0 & A_{s,t} \end{pmatrix}.$$

The following result will provide us the main tool to prove that the distance between $A$ and $B$ is at most $4$ also in the previous setting. It is true for any division ring $D$.

\begin{prop}
\label{prop}
Let $A,B\in \mathrm{M}_n(D)$ matrices such that $A^2=A$ and $B^2=0$. Then, there exists a non-scalar matrix commuting with both $A$ and $B$.
\end{prop}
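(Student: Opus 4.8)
The plan is to work relative to the idempotent $A$. First I would dispose of the degenerate cases: if $A$ is scalar (that is, $A=0$ or $A=I_n$), then any non-scalar matrix commuting with $B$ suffices, and one exists because $B$ itself serves when $B$ is non-scalar, while \emph{any} non-scalar matrix serves when $B=0$; the case of scalar $B$ is handled symmetrically. So from now on assume $A$ is a genuine idempotent of rank $r$ with $0<r<n$ and $B\neq 0$.

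Next I would decompose $D^n=\mathrm{im}\,A\oplus\ker A$. In a basis adapted to this splitting $A=I_r\oplus 0$, and writing $B$ in the corresponding $2\times 2$ block form $B=\left(\begin{smallmatrix}B_1&B_2\\ B_3&B_4\end{smallmatrix}\right)$, the hypothesis $B^2=0$ expands into the four relations $B_1^2+B_2B_3=0$, $B_3B_2+B_4^2=0$, $B_1B_2+B_2B_4=0$ and $B_3B_1+B_4B_3=0$. The last two, rewritten as $B_1B_2=-B_2B_4$ and $B_3B_1=-B_4B_3$, are the ones I will exploit.

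The crux is the following explicit choice: set $C:=B_1\oplus(-B_4)$. Being block diagonal, $C$ commutes with $A=I_r\oplus 0$ automatically; and a direct block computation shows that $CB=BC$ reduces \emph{exactly} to the two relations $B_1B_2=-B_2B_4$ and $B_3B_1=-B_4B_3$ derived above, so $C$ commutes with $B$ as well. Thus $C$ is a common commuting matrix, and the only remaining point is to check that it can be chosen non-scalar.

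The main obstacle is therefore the residual case in which $C=B_1\oplus(-B_4)$ happens to be scalar, namely $B_1=zI_r$ and $B_4=-zI_{n-r}$ for some central $z$. Here the first two block relations give $B_2B_3=-z^2I_r$ and $B_3B_2=-z^2I_{n-r}$, which I plan to use to produce a non-scalar common commuting matrix by hand. When $z\neq 0$ these force $B_2,B_3$ to be invertible and $n=2r\geq 4$, after which $C_1\oplus(B_2^{-1}C_1B_2)$ commutes with both $A$ and $B$ for every $C_1$, and any non-scalar $C_1\in\mathrm{M}_r(D)$ finishes the argument. When $z=0$ one has $B_2B_3=B_3B_2=0$, and I would build the required matrix from the projection onto $\ker B_3$ and the projection onto a complement of $\ker B_2$, the rank relations $\mathrm{im}\,B_2\subseteq\ker B_3$ and $\mathrm{im}\,B_3\subseteq\ker B_2$ guaranteeing their compatibility; the hypothesis $n\geq 3$ ensures that at least one of these projections, or an intertwiner acting on a block of size at least two, is non-scalar. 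Checking that these constructions cover every configuration of $B_2$ and $B_3$ is the step that will require the most care.
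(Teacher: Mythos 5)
Your argument is correct in outline and takes a genuinely different route from the paper. The paper never diagonalizes $A$ alone: it first invokes \cite[Lemma 4]{Ak3} when $\mathrm{Ker}\,L_A\cap\mathrm{Ker}\,L_B$ and $\mathrm{Ker}\,R_A\cap\mathrm{Ker}\,R_B$ are both nonzero, and otherwise glues bases of $\mathrm{Ker}\,L_A$ and $\mathrm{Ker}\,L_B$ into a basis of $D^n$, bringing the pair to the form $\left(\begin{smallmatrix}0&A'\\0&I_r\end{smallmatrix}\right)$, $\left(\begin{smallmatrix}0&0\\B'&0\end{smallmatrix}\right)$ and finishing with $A'B'\oplus B'A'$ (or $S\oplus S$). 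You instead fix the splitting $D^n=\mathrm{im}\,A\oplus\ker A$ once and for all and read everything off the block relations of $B^2=0$. Your key observation that $C=B_1\oplus(-B_4)$ automatically centralizes both $A$ and $B$ is correct (the block computation checks out), and it buys you a more self-contained proof: no external lemma, no simultaneous basis construction, and the hard configurations are isolated cleanly. Your $z\neq 0$ branch is also correct as written.

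What you have not finished is the $z=0$ branch, and you should know that it does require a real, if routine, case split --- but no new idea. There $B=\left(\begin{smallmatrix}0&B_2\\B_3&0\end{smallmatrix}\right)$ with $\mathrm{im}\,B_3\subseteq\ker B_2$ and $\mathrm{im}\,B_2\subseteq\ker B_3$, and a block-diagonal $C_1\oplus C_4$ works iff $C_1B_2=B_2C_4$ and $C_4B_3=B_3C_1$. Taking $C_1=0$ and $C_4$ of rank one with $\mathrm{im}\,C_4\subseteq\ker B_2$ and $\mathrm{im}\,B_3\subseteq\ker C_4$ succeeds unless $\ker B_2=0$ or $B_3$ is surjective; the mirror choice $C_4=0$ succeeds unless $B_2$ is surjective or $\ker B_3=0$; and the two inclusions above show that the only configurations escaping both are $B_3=0$ with $B_2$ bijective, or $B_2=0$ with $B_3$ bijective. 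These force $n=2r$ and fall to the same conjugation trick $C_1\oplus(B_2^{-1}C_1B_2)$ (resp.\ $(B_3^{-1}C_4B_3)\oplus C_4$) you already used. One last caveat: $n\geq 3$ is not actually a hypothesis of the Proposition, though you invoke it and genuinely need it --- the statement fails for $n=2$ (take $A=\left(\begin{smallmatrix}1&0\\0&0\end{smallmatrix}\right)$, $B=\left(\begin{smallmatrix}0&1\\0&0\end{smallmatrix}\right)$ over $\mathbb{R}$); the paper's own proof makes the same silent assumption (its final step needs a non-scalar matrix in $\mathrm{M}_r(D)$ with $r=n/2$), and since the Proposition is only applied with $n=4$ this is harmless, but the hypothesis should be stated.
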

\begin{proof}
Since $A^2=A$; i.e., $A(I-A)=(I-A)A=0$, then one of nullity $A$ or nullity $(I-A)$ is at least $n/2$. Since $I-A$ is also idempotent and a matrix commutes with $A$ if and only if it commutes with $I-A$ we can assume that nullity $A\geq n/2$. On the other hand, since $B^2=0$, it follows that nullity $B\geq n/2$.

Now, if $\textrm{Ker} L_A\cap\textrm{Ker} L_B\neq\{0\}$ and $\textrm{Ker} R_A\cap\textrm{Ker} R_B\neq\{0\}$ we can apply \cite[Lemma 4]{Ak3} and the result follows. Hence we assume that $\textrm{Ker} L_A\cap\textrm{Ker} L_B=\{0\}$ (if it was $\textrm{Ker} R_A\cap\textrm{Ker} R_B=\{0\}$ we could consider $A^t$ and $B^t$). Note that, in these conditions, $n=2r$ and nullity $A$ and nullity $B$ are equal to $r$.

Let $\mathcal{B}_1$ and $\mathcal{B}_2$ be bases for $\textrm{Ker} L_A$ and $\textrm{Ker} L_B$, respectively, and consider $\mathcal{B}=\mathcal{B}_1\cup\mathcal{B}_2$ a basis for $D^n$. Since $A$ is idempotent, it follows that $D^n=\textrm{Ker} L_A\oplus \textrm{Im} L_A$.

We want to construct the matrix of $L_A$ in the basis $\mathcal{B}$. To do so, if $v\in\mathcal{B}_2$, we write $v=a+a'$ with $a\in\textrm{Ker}L_A$ and $a'\in\textrm{Im}L_A$. Hence, $Av=Aa+Aa'=0+A(Aa'')=Aa''=a'=-a+v$. Since it is clear that $Av=0$ for every $v\in\mathcal{B}_1$ we get that the matrix of $L_A$ in the basis $\mathcal{B}$ is of the form
$$\begin{pmatrix} 0 & A'\\ 0 & I_r\end{pmatrix},$$
with $A'\in \mathrm{M}_r(D)$.

Now we want to construct the matrix of $L_B$ in the basis $\mathcal{B}$. Clearly $Bv=0$ for every $v\in\mathcal{B}_2$. Now, let $w\in\mathcal{B}_1$. Then, $Bw=w_1+w_2$ with $w_1\in\textrm{Ker} L_A$ and $w_2\in\textrm{Ker}L_B$. Hence, $0=B^2w=Bw_1$ and $w_1\in\textrm{Ker} L_A\cap\textrm{Ker} L_B=\{0\}$. Thus, the matrix of $L_B$ in the basis $\mathcal{B}$ is of the form:
$$\begin{pmatrix} 0 & 0\\ B' & 0\end{pmatrix},$$
with $A'\in \mathrm{M}_r(D)$.

As a consequence of the previous work we can find a regular matrix $P$ such that:
$$PAP^{-1}=\begin{pmatrix} 0 & A'\\ 0 & I_r\end{pmatrix},\quad PBP^{-1}=\begin{pmatrix} 0 & 0\\ B' & 0\end{pmatrix}.$$

Now, if $A'B'\neq B'A'$ we can consider the matrix
$$P^{-1}(A'B'\oplus B'A')P=P^{-1}\begin{pmatrix} A'B' & 0\\ 0 & B'A'\end{pmatrix}P,$$
which is clearly non-scalar and commutes with $A$ and $B$. On the other hand, if $A'$ and $B'$ commute, we can find a non-scalar matrix $S\in \mathrm{M}_r(D)$ commuting with both $A'$ and $B'$. Therefore $P^{-1}(S\oplus S)P$ commutes with both $A$ and $B$ and the proof is complete.
\end{proof}

In addition to this result, we will also need the following technical lemmata.

\begin{lem}
If $A\sim \begin{pmatrix} A_{a,b} & 0 \\ 0 & A_{c,d} \end{pmatrix}$, then there exists an idempotent non-scalar matrix $M$ such that $AM=MA$.
\end{lem}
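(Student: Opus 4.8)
The plan is to transport an obvious block projection back through the similarity. By the definition of $\sim$ there is a regular (real) matrix $P$ with
$$P^{-1}AP=A_{a,b}\oplus A_{c,d}.$$
The key observation is that the right-hand side is block diagonal with $2\times 2$ blocks, so the simplest non-trivial idempotent $E:=I_2\oplus 0$ (where $0$ denotes the $2\times 2$ zero matrix) commutes with it: indeed $I_2$ commutes with $A_{a,b}$ and $0$ commutes with $A_{c,d}$, so $(A_{a,b}\oplus A_{c,d})E=E(A_{a,b}\oplus A_{c,d})$. Moreover $E$ is manifestly idempotent and non-scalar. I would then define $M:=PEP^{-1}$ and claim this is the desired matrix.

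The verification that $M$ works is then entirely formal and splits into three routine checks. First, $M^2=PEP^{-1}PEP^{-1}=PE^2P^{-1}=PEP^{-1}=M$, so $M$ is idempotent. Second, writing $A=P(A_{a,b}\oplus A_{c,d})P^{-1}$ and using that $E$ commutes with $A_{a,b}\oplus A_{c,d}$, one gets
$$AM=P(A_{a,b}\oplus A_{c,d})EP^{-1}=PE(A_{a,b}\oplus A_{c,d})P^{-1}=MA,$$
so $M$ commutes with $A$. Third, $M$ is non-scalar: if $M=\lambda I_4$ for some $\lambda$, then $E=P^{-1}MP=\lambda I_4$, contradicting that $E$ is non-scalar. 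Since $A$ and $P$ are real, $M=PEP^{-1}$ is a genuine real matrix, which is what the statement requires.

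I do not expect a genuine obstacle here; the only point that deserves a sentence rather than a symbol is the preservation of non-scalarity under conjugation, and the (automatic) fact that $P$ may be taken real because the block form $A_{a,b}\oplus A_{c,d}$ is a real normal form for $A$. Everything else reduces to the single structural fact that conjugation is a ring automorphism, so it preserves idempotency, the commuting relation, and the property of being non-scalar simultaneously.
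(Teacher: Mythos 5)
Your proof is correct and is essentially the paper's own argument: the paper conjugates the block idempotent $\begin{pmatrix} 0 & 0\\ 0 & I_2\end{pmatrix}$ (the complement of your $E$) back through the similarity in exactly the same way. Your version merely spells out the routine verifications that the paper leaves implicit.
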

\label{l1}
\begin{proof}
$A=P^{-1}  \begin{pmatrix} A_{a,b} & 0 \\ 0 & A_{c,d} \end{pmatrix}P$ for some regular $P\in \mathrm{M}_4(\mathbb{R})$. Hence, it is enough to consider $M=P^{-1}  \begin{pmatrix} 0 & 0 \\ 0 & I_2 \end{pmatrix}P$.
\end{proof}

\begin{lem}
\label{l2}
If $B\sim \begin{pmatrix} A_{s,t} & I_2 \\ 0 & A_{s,t} \end{pmatrix}$, then there exists a non-scalar matrix $N$ such that $N^2=0$ and $BN=NB$.
\end{lem}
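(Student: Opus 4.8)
The plan is to exploit conjugation-invariance to reduce the problem to the explicit block matrix, and then to produce the desired $N$ by an elementary block computation. Write $B=P^{-1}CP$ with $C=\begin{pmatrix} A_{s,t} & I_2 \\ 0 & A_{s,t}\end{pmatrix}$ and $P$ regular. If I can find a non-scalar $N_0\in\mathrm{M}_4(\mathbb{R})$ with $N_0^2=0$ and $CN_0=N_0C$, then $N:=P^{-1}N_0P$ does the job: conjugation preserves the square-zero relation (since $N^2=P^{-1}N_0^2P=0$), it preserves commutation with the matrix being conjugated, and it carries non-scalar matrices to non-scalar matrices. So the whole problem collapses to the single concrete matrix $C$.

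For the candidate $N_0$ I would look among the block matrices of the form $N_0=\begin{pmatrix} 0 & X \\ 0 & 0\end{pmatrix}$ with $X\in\mathrm{M}_2(\mathbb{R})$, because any such matrix automatically squares to zero and is non-scalar as soon as $X\neq 0$ (its diagonal blocks vanish, so the only scalar matrix of this shape is the zero matrix). A direct multiplication gives $CN_0=\begin{pmatrix} 0 & A_{s,t}X \\ 0 & 0\end{pmatrix}$ and $N_0C=\begin{pmatrix} 0 & XA_{s,t} \\ 0 & 0\end{pmatrix}$, so the commutation relation $CN_0=N_0C$ reduces to the much smaller condition $A_{s,t}X=XA_{s,t}$.

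It then remains to choose $X\neq 0$ commuting with $A_{s,t}$, which is immediate: take $X=I_2$. Then $A_{s,t}I_2=I_2A_{s,t}$ holds trivially, so $N_0=\begin{pmatrix} 0 & I_2 \\ 0 & 0\end{pmatrix}$ is non-scalar, satisfies $N_0^2=0$, and commutes with $C$; transporting back, $N=P^{-1}\begin{pmatrix} 0 & I_2 \\ 0 & 0\end{pmatrix}P$ is the required matrix. I do not anticipate a genuine obstacle here. The essential move is the reduction by conjugation, and the only points that require a little care are the non-scalar claim (which is exactly why the off-diagonal block shape is convenient) and the remark that conjugation by $P$ simultaneously preserves all three required properties.
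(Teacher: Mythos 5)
Your proposal is correct and uses exactly the same matrix $N=P^{-1}\begin{pmatrix} 0 & I_2 \\ 0 & 0\end{pmatrix}P$ as the paper's one-line proof, merely spelling out the verification in more detail.
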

\begin{proof}
$B=P^{-1}\begin{pmatrix} A_{s,t} & I_2 \\ 0 & A_{s,t} \end{pmatrix}P$ for some regular $P\in \mathrm{M}_4(\mathbb{R})$. Hence, it is enough to consider $N=P^{-1}\begin{pmatrix} 0 & I_2 \\ 0 & 0 \end{pmatrix} P$.
\end{proof}

We are now in the condition to prove the main result of the paper.

\begin{teor}
The diameter of $\Gamma(\mathrm{M}_4(\mathbb{R}))$ is four.
\end{teor}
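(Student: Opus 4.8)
The plan is to dispose of the single configuration left open by the previous theorem, namely the case (singled out just before Proposition~\ref{prop})
$$A\sim \begin{pmatrix} A_{a,b} & 0 \\ 0 & A_{c,d} \end{pmatrix},\qquad B\sim \begin{pmatrix} A_{s,t} & I_2 \\ 0 & A_{s,t} \end{pmatrix}.$$
For every other pair of Jordan forms the bound $d(A,B)\le 4$ is already known, so it is enough to produce a walk of length at most $4$ between $A$ and $B$ in this remaining case; since the lower bound $\textrm{diam}(\Gamma(\mathrm{M}_4(\mathbb{R})))\ge 4$ is also already available, this will pin the diameter at exactly $4$.

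First I would invoke the two technical lemmata above: the first supplies a non-scalar idempotent $M$ with $M^2=M$ and $AM=MA$, while Lemma~\ref{l2} supplies a non-scalar $N$ with $N^2=0$ and $BN=NB$. These are exactly the algebraic hypotheses of Proposition~\ref{prop} (an idempotent together with a square-zero matrix). Applying the proposition to the pair $(M,N)$ then returns a non-scalar matrix $C$ commuting with both $M$ and $N$.

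These data chain into the walk
$$A \;-\; M \;-\; C \;-\; N \;-\; B,$$
in which every consecutive pair commutes by construction, and in which $M$, $C$, $N$ are all non-scalar and hence genuine vertices of $\Gamma(\mathrm{M}_4(\mathbb{R}))$. As $A$ and $B$ have different Jordan canonical forms they are distinct vertices, and a walk of length $4$ between distinct vertices forces the existence of a path of length at most $4$; thus $d(A,B)\le 4$, which closes the last case.

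I expect essentially no obstacle remaining at this stage: the one genuinely hard step, manufacturing a single non-scalar matrix that commutes simultaneously with an idempotent and a nilpotent of square zero, has already been executed in Proposition~\ref{prop}. What is left is the bookkeeping of verifying that the lemmata output operators of precisely the type the proposition consumes, together with the (standard) observation that a walk, not necessarily a reduced path, already bounds the graph distance. The only point demanding a moment's care is confirming non-scalarity of $M$, $C$ and $N$ and the distinctness $A\ne B$, which is what keeps all five terms inside the commuting graph.
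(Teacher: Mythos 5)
Your proposal is correct and follows essentially the same route as the paper: reduce to the one remaining Jordan-form configuration, use the two lemmata to produce the non-scalar idempotent $M$ and square-zero $N$, apply Proposition~\ref{prop} to obtain a common non-scalar commuting matrix, and read off the length-$4$ path $(A,M,X,N,B)$. The extra remarks on non-scalarity and on walks versus paths are harmless bookkeeping already implicit in the paper's argument.
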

\begin{proof}
In \cite{cel} it was proved that $d(A,B)\leq 4$ for every $A,B\in \mathrm{M}_4(\mathbb{R})$ unless $A\sim \begin{pmatrix} A_{a,b} & 0 \\ 0 & A_{c,d} \end{pmatrix}$ and $B\sim \begin{pmatrix} A_{s,t} & I_2 \\ 0 & A_{s,t} \end{pmatrix}$. Hence, we only focus on this case.

By Lemma \ref{l1} there exists an idempotent non-scalar matrix $M$, such that $AM=MA$. Also, by Lemma \ref{l2}, there exists a non-scalar matrix $N$ such that $N^2=0$ and $NB=BN$. Finally, Proposition \ref{prop} implies that there exists a non-scalar matrix $X$ that commutes both with $M$ and $N$.

Thus, we have found a path $(A,M,X,N,B)$ of length $4$ connecting $A$ and $B$ and the result follows.
\end{proof}

\end{document}